\documentclass[a4paper,10pt]{article}

\usepackage{amsmath,amsfonts}

\usepackage[T1]{fontenc}

\newtheorem{Theorem}{\quad Theorem}[section]

\newtheorem{Lemma}[Theorem]{\quad Lemma}
\newtheorem{Proposition}[Theorem]{\quad Proposition}

\newcommand{\be} {\begin{equation}}
\newcommand{\ee} {\end{equation}}

\title{$ \sup \times \inf $ inequality on manifolds of dimension 5.}

\author{Samy Skander Bahoura \footnote{Adresses e-mail: samybahoura@gmail.com}}

\date{ \small Department of Mathematics, Pierre and Marie Curie University, 75005, Paris, France.}

\begin{document}

\maketitle

\begin{abstract}

We give an inequality of type $ \sup \times \inf $ in dimension 5 for a Yamabe type equation.

\smallskip

{\it Keywords:} $ \sup \times \inf $, a priori estimate, Riemanian manifolds, Yamabe type equation, dimension 5.

\end{abstract}

\section{Introduction and Main Result}

We consider a Riemannian manifold $ (M,g) $ of dimension $ n\geq 2 $ and we denote by $ \Delta=-\nabla^i(\nabla_i) $ the Laplace-Beltrami operator.

\smallskip

The Yamabe type equation (also called Schrodinger equation) in dimension $ n\geq 3 $ is:

$$ \Delta u+hu= n(n-2)u^{N-1}, \,\, u >0, \,\, N=\frac{2n}{n-2}.$$

with $ h $ a smooth function. When $ h=\frac{n-2}{4(n-1)} R_g $ with $ R_g $ the scalar curvature of $ (M,g)$, the equation is the Yamabe equation. If the number $ n(n-2) $ is replaced by a function $ V $, the equation is called, the prescribed scalar curvature equation.

\smallskip

About the existence result for the Yamabe equation, see [1].

\smallskip

In this paper, we are interested by inequalities of type $ \sup \times \inf $ relative to Yamabe type equation.

\smallskip

These type of estimates were studied for the prescribed scalar curvature equation and Yamabe equation in [1-20]. 

\smallskip

In dimension 2 and on domains of $ {\mathbb R}^2 $, Brezis-Li-Shafrir, see [8], proved  that $ \sup + \inf $ is bounded when the prescibed scalar curvature are Lipschitz, in [11], Chen-Lin extend this result to H\"older prescribed scalar curvatures.

\smallskip

In dimensions $ n\geq 3 $, there are many results about $ \sup \times \inf $ inequalities, see, [2-9], [11-12], [17-19].

\smallskip

Here, we are on a Riemannian manifold $ (M,g) $ of dimension $ n= 5 $, and we consider a Yamabe type equation:

\be \Delta u+hu=15 u^{7/3},\,\, u >0, \ee

with $ h $ a smooth function, $ ||h||_{C^2(M)} \leq h_0, h_0 >0 $. (Here: $n(n-2)=15 $).

When $ M $ is compact without boundary, this equation, where studied by many authors. See [1,13,15,17,19].

\smallskip

An important result in dimensions $ n=3,4 $ by Li-Zhang in [18], they proved the boundedness of $ \sup \times \inf $ for the Yamabe equation.

\smallskip

Our main result is:
 
\begin{Theorem} For all compact subset $ K $ of $ M $, there exists a positive constant $ c=c(K,M, h_0, g)>0 $ such that:

$$ (\sup_K u)^{1/7} \times \inf_M u \leq c. $$

for all $ u >0 $ solution of (1).

\end{Theorem}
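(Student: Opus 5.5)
We argue by contradiction through a blow-up analysis combined with the moving-plane method, following the scheme of Li--Zhang [18] adapted to dimension $5$ in the weaker form $(\sup)^{1/7}\times\inf$. Suppose the statement fails. Then there are a compact $K$ and a sequence of solutions $u_i$ of (1), with potentials $h_i$ satisfying $\|h_i\|_{C^2}\le h_0$, such that $(\sup_K u_i)^{1/7}\inf_M u_i\to+\infty$. Because of the Harnack-type lower estimate for the inferior, $\inf_M u_i$ cannot stay large while the supremum is controlled; in any case we get $\sup_K u_i\to+\infty$. Take points $x_i\in K$ with $u_i(x_i)=\max_K u_i$ and pass to $x_i\to x_0$.

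The standard selection procedure (Schoen's argument) lets us assume, after moving the points slightly, that $x_i$ is a local maximum. We may also assume that $x_i$ is an isolated blow-up point: $u_i(x)\le C\,d(x,x_i)^{-3/2}$ on a fixed geodesic ball $B(x_i,\delta)$.

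The analytic work is carried out in conformal normal coordinates around $x_i$ (or plain normal coordinates, since only $C^2$ control of $h$ is used). We rescale by
$$v_i(y)=M_i^{-1}u_i\bigl(\exp_{x_i}(M_i^{-2/3}y)\bigr),\qquad M_i=u_i(x_i).$$
By elliptic estimates and the Caffarelli--Gidas--Spruck classification, $v_i\to (1+|y|^2)^{-3/2}$ locally in $C^2$. The new exponent $1/7$ is meant to absorb the error terms that appear in dimension $5$. These come from the metric, whose expansion $g_{jk}=\delta_{jk}+O(|x|^2)$ contributes terms of order $|x|^2|\nabla^2 u|$, and from the linear term $hu$. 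After rescaling, both produce errors of size $M_i^{-4/3}|y|^{2}v_i$-type. In dimension $5$ these are not negligible against the bubble decay $|y|^{-3}$ without some help.

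We then run the moving-plane method on the Kelvin transform of $v_i$ about the origin, following Chen--Lin and Li--Zhang. Writing $w_\lambda(y)=w_i(y^\lambda)-w_i(y)$ for the reflected difference, we add to it an auxiliary function of the form
$$\pm\, C M_i^{-4/3}\lambda^{a}|y|^{-b},$$
chosen so that it dominates the error terms and preserves the maximum principle on $\{|y|\ge\lambda\}$. The key estimate needed to start the procedure and to control the boundary of the region is a lower bound for $w_i$ far away, coming from $\inf_M u_i$. By the Green representation, $u_i\ge c\,\inf_M u_i$ on the whole ball. After the Kelvin transform and rescaling, this gives
$$w_i(y)\ge c\,M_i\,\inf u_i\,|y|^{-3}\quad\text{for } |y|\ \text{of order } M_i^{2/3}\delta.$$
The contradiction hypothesis says that $M_i^{1/7}\inf u_i\to\infty$. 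This lower bound must beat the error size coming from the auxiliary function at the outer boundary. Making the exponents balance is exactly what should produce the power $1/7$, and I would fix $a,b$ by requiring this compatibility.

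With the boundary control in place, the moving plane can be pushed to any $\lambda<\lambda_0$ for all sufficiently large $i$. In the limit this forces the profile $(1+|y|^2)^{-3/2}$ to satisfy $w_\lambda\ge0$ for every $\lambda$ in a range including values below $1$. That contradicts the explicit symmetry of the standard bubble, which is symmetric only about $\lambda=1$. This contradiction proves the theorem.

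The main obstacle is the construction of the auxiliary function in dimension $5$. Its Laplacian must dominate the error terms $O(M_i^{-4/3}|y|^{-1})$ arising from $h$ and the metric, and it must remain small compared with $w_\lambda$ near the sphere $|y|=\lambda$. I would try $b=1$ with a logarithmic correction first, and expect to lose a power there, which is why the estimate is only obtained with the exponent $1/7$ rather than $1$.
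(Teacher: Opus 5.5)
Your outline is a plan, not a proof. It stops exactly where the proof has to happen: the corrector is never constructed, the differential inequality for $w_\lambda$ plus corrector is never checked, and the exponent $1/7$ is asserted (``making the exponents balance is exactly what should produce the power $1/7$'') rather than derived.

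The bookkeeping you do give is off. Write the equation of $v_i-v_i^\lambda$ so that the metric and $h$ errors fall on the Kelvin transform $v_i^\lambda$, as in Li--Zhang. (This also spares you the isolated-blow-up bound on a fixed ball, which you assume without justification.) The error is then $O(M_i^{-4/3}(|y|^2|\nabla^2 v_i^\lambda|+|y||\nabla v_i^\lambda|+v_i^\lambda))=O(M_i^{-4/3}|y|^{-3})$. Your size $M_i^{-4/3}|y|^2 v_i=O(M_i^{-4/3}|y|^{-1})$ replaces $|\nabla^2 v_i|$ by $v_i$. With that size the corrector would have to grow like $M_i^{-4/3}|y|$, reaching $M_i^{-2/3}$ at $|y|\sim M_i^{2/3}\delta$. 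The lower bound $M_i^{-1}\inf_M u_i$ could beat that only if $\inf_M u_i\gg M_i^{1/3}$.

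With the correct size, no logarithm occurs in dimension $5$; that is the case $n=4$. Since $\sum_j\partial_j^2|y|^{-1}=-2|y|^{-3}$ on ${\mathbb R}^5\setminus\{0\}$, the natural corrector is $CM_i^{-4/3}(\lambda^{-1}-|y|^{-1})$. It does not decay but tends to a constant of order $M_i^{-4/3}$. Everything then rests on two things you do not do: dominating that constant, and the term $M_i^{-4/3}h\phi$ it produces, throughout the far region; and the continuation step at the critical sphere. The naive count in your framework ($M_i^{-4/3}$ against $M_i^{-1}\inf_M u_i$) has nothing to do with $1/7$.

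The endgame is also misstated. With $U=(1+|y|^2)^{-3/2}$ and $U^\lambda(y)=(\lambda/|y|)^3U(\lambda^2y/|y|^2)=\lambda^3(|y|^2+\lambda^4)^{-3/2}$, one has $U^\lambda\le U$ on $\{|y|\ge\lambda\}$ for every $\lambda\le 1$. So a comparison valid for $\lambda$ below $1$ contradicts nothing; the procedure has to be pushed past $\lambda=1$.

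The idea that closes the argument in the paper is to let the contradiction hypothesis supply a \emph{large} corrector, rather than one sized by the errors.

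\begin{itemize}
\item The paper works in log-polar coordinates, $\tilde w_i=\sqrt{b_1}\,e^{3t/2}u_i\circ\exp_{y_i}(e^t\theta)$, and runs the moving plane in $t$ on $\bar w_i=\tilde w_i-\mu_i e^{2t}$ with $\mu_i=\frac12[u_i(y_i)]^{\alpha/3}\min_M u_i\to+\infty$.
\item After reflection the error terms involve only $\tilde w_i^{\xi_i}$. That function lives at the bubble scale and is bounded in $C^2$, so the errors are at most $C'(R)|e^{2t}-e^{2t^{\xi_i}}|$.
\item Meanwhile $-\mu_i\bar Z_i(e^{2t^{\xi_i}}-e^{2t})\le\mu_i c_1(e^{2t^{\xi_i}}-e^{2t})\le 0$, which swamps the errors once $\mu_i c_1\ge C'(R)$.
\item Positivity of $\bar w_i$ holds up to $t_i=-\frac{2\alpha}{3}\log u_i(y_i)$, where $\mu_i e^{2t_i}=\frac12 e^{3t_i/2}\min_M u_i$.
\item Because $\bar w_i(\lambda_i,\cdot)>\bar w_i(\lambda_i+4,\cdot)$, the plane stops at some $\xi_i<\lambda_i+2$, and the Hopf principle forces touching at $t=t_i$.
\item Combining $\bar w_i(t_i,\cdot)\ge c\,e^{3t_i/2}\min_M u_i$ with $\bar w_i^{\xi_i}(t_i,\cdot)\le\tilde w_i(2\xi_i-t_i,\cdot)\le Ce^{3(\lambda_i-t_i)/2}$ gives $[u_i(y_i)]^{1-2\alpha}\min_M u_i\le\tilde c$.
\end{itemize}

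The exponent is forced by requiring two quantities to coincide: the one that must blow up for the corrector to work, and the one that is bounded at $t_i$. That means $\alpha/3=1-2\alpha$, i.e.\ $\alpha=3/7$, and both become $[u_i(y_i)]^{1/7}\min_M u_i$. If you prefer a moving-sphere scheme with a small Li--Zhang corrector, you must carry out the construction and the far-region comparison explicitly and see what exponent it actually yields.
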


In [19], Li-Zhu, gave an example of solution of a Yamabe type equation on domain of $ {\mathbb R}^5 $ with $ \sup \times \inf \to +\infty $. Here , we obtain a weaker inequality.

\smallskip

In [3], we considered the Yamabe equation. In this paper we take $ h\not =\frac{3}{16} R_g $, with $ R_g $ the scalar curvature. Here we consider more general equation, of Yamabe type.

\section{Proof of the Theorem:}

\bigskip

\underbar {\bf Blow-up analysis:}

\bigskip

Let $ u $ a function on $ M $ such that,

$$ \Delta u + h u =15{u}^{7/3}, \,\, u>0. $$

We argue by contradiction and we suppose that $ \sup \times \inf $ is not bounded.

We suppose:

$ \forall \,\, c,R >0 \,\, \exists \,\, u_{c,R} $ solution of $ (E) $ such that:

$$ R^{3} (\sup_{B(x_0,R)} u_{c,R})^{1/7} \times \inf_{M} u_{c,R} \geq c, \qquad (H) $$

\bigskip

\begin{Proposition} 

\smallskip

There are, $ (y_i)_i $, $ y_i \to x_0 $ and $ (l_i)_i, (L_i)_i $, $ l_i \to 0 $, $ L_i \to +\infty $, such that if we set, $ v_i(y)=\dfrac{u_io\exp_{y_i}(y)}{u_i(y_i)} $, we have:

$$ 0 < v_i(y) \leq  \beta_i \leq 2^{3/2}, \,\, \beta_i \to 1. $$

$$  v_i(y)  \to \left ( \dfrac{1}{1+{|y|^2}} \right )^{3/2}, \,\, {\rm uniform\'ement \,\, sur \,\, tout \,\, compact \,\, de } \,\, {\mathbb R}^5 . $$

$$ l_i^{3/2} [u_i(y_i)]^{1/7} \times \inf_M u_i \to +\infty. $$

\end{Proposition}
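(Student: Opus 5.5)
We follow the standard blow-up scheme of Schoen and Li, combined with a selection-of-maximum lemma (as in Li--Zhang or Brezis--Li--Shafrir). First we turn hypothesis $(H)$ into a sequence. Taking $R=1/i$ and $c=i$ (any choice with $R_i\to 0$, $c_i\to+\infty$) gives solutions $u_i$ of (1) with
$$ R_i^{3}\Bigl(\sup_{B(x_0,R_i)}u_i\Bigr)^{1/7}\inf_M u_i\to+\infty .$$
We pick $x_i\in \bar B(x_0,R_i)$ where $u_i$ attains its maximum on that ball. Note that $\inf_M u_i$ stays bounded: integrating (1) or comparing with a test function shows $\inf_M u_i\le C(h_0,M,g)$. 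Hence $u_i(x_i)\to+\infty$, and indeed $R_i^{21/2}u_i(x_i)\to+\infty$.

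Next we select good points by the usual doubling argument. Set $s_i(x)=(R_i-d(x,x_i))^{3/2}u_i(x)$ on $B(x_i,R_i)$. It vanishes on the boundary, so it has a maximum point $y_i$. Put $l_i=\tfrac12(R_i-d(y_i,x_i))$, so that $l_i\le R_i\to0$ and $y_i\to x_0$. Maximality of $s_i$ gives $u_i\le 2^{3/2}u_i(y_i)$ on $B(y_i,l_i)$. It also gives
$$ l_i^{3/2}u_i(y_i)^{1/7}\ge 2^{-3/2}R_i^{3/2}u_i(x_i)\,u_i(y_i)^{1/7-1}.$$
This does not yet give the third claim, so we argue more carefully. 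Since $s_i(y_i)\ge s_i(x_i)$,
$$ l_i^{3/2}u_i(y_i)\ge 2^{-3/2}R_i^{3/2}u_i(x_i)\ge 2^{-3/2}R_i^{3/2}\sup_{B(x_0,R_i)}u_i\ge 2^{-3/2}R_i^{3/2}u_i(y_i).$$
Consequently
$$ l_i^{3/2}u_i(y_i)^{1/7}\inf u_i=\bigl(l_i^{3/2}u_i(y_i)\bigr)^{1/7}\,l_i^{\frac{3}{2}\cdot\frac67}\inf u_i .$$
To make this go to infinity, the exponents must be tuned. The cleanest route is to work with a weight tailored to the statement: use $s_i(x)=(R_i-d)^{21/2}u_i$, or equivalently track $l_i^{3/2}u_i^{1/7}$. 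Then combine with $l_i\le R_i$ and $(H)$, i.e. $R_i^3(\sup u_i)^{1/7}\inf u_i\to\infty$, so that $l_i^{3/2}u_i(y_i)^{1/7}\inf_M u_i\to+\infty$. In particular $L_i:=l_i\,u_i(y_i)^{2/3}\to+\infty$, since $\inf u_i$ is bounded. Choosing the power of the weight consistently with the exponents $3$ and $1/7$ in $(H)$ is the bookkeeping step we expect to be the main obstacle, and we would adjust the weight until both inequalities close.

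Then we rescale. Set $v_i(y)=u_i(\exp_{y_i}(y\,u_i(y_i)^{-2/3}))/u_i(y_i)$; this is the normalization in the statement, with the scaling made explicit. By the conformal scaling $u\mapsto \lambda^{3/2}u(\lambda\,\cdot)$ in dimension 5, $v_i$ solves
$$ \Delta_{g_i}v_i+u_i(y_i)^{-4/3}h\,v_i=15v_i^{7/3}$$
on $B(0,L_i)$. Here the metrics $g_i\to$ Euclidean in $C^2_{loc}$ and the potential term tends to $0$. We have $v_i(0)=1$ and $0<v_i\le\beta_i\le2^{3/2}$, where $\beta_i=\sup_{B(0,L_i)}v_i$. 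Refining the choice of $y_i$ as a near-maximum, i.e. shrinking to $l_i'=l_i\varepsilon_i$ with $\varepsilon_i\to0$ slowly, gives $\beta_i\to1$.

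Finally we pass to the limit. Elliptic estimates give $C^{2,\alpha}_{loc}$ convergence to a solution $v$ of $\Delta v=15v^{7/3}$ on $\mathbb R^5$ with $0<v\le1=v(0)$. By Caffarelli--Gidas--Spruck, $v$ is a standard bubble with its maximum at $0$ and value $1$, so $v=(1+|y|^2)^{-3/2}$. This yields the uniform convergence on compact sets of $\mathbb R^5$.
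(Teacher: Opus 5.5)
Your treatment of the first two items matches the paper. That covers doubling with $s_i=(R_i-d(x,x_i))^{3/2}u_i$, shrinking the radius by a factor $\delta_i\to0$ so that $\beta_i\to1$ (the paper takes $\delta_i=c_i^{-1/6}$ and gets $L_i\ge c_i^{1/6}$), rescaling by $u_i(y_i)^{-2/3}$, elliptic estimates, the strong maximum principle, and Caffarelli--Gidas--Spruck with $v(0)=1=\max v$.

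The genuine gap is the third item, $l_i^{3/2}u_i(y_i)^{1/7}\inf_M u_i\to+\infty$. You never prove it; you defer it to adjusting the weight until the inequalities close. The one concrete chain you write relies on $\sup_{B(x_0,R_i)}u_i\ge u_i(y_i)$, which is false in general. The point $y_i$ only lies in $B(x_i,R_i)\subset B(x_0,2R_i)$. In fact $s_i(y_i)\ge s_i(x_i)$ together with $2l_i\le R_i$ forces $u_i(y_i)\ge u_i(x_i)$, the opposite direction. That false inequality would have produced $l_i\ge R_i/2$, which is exactly the missing lower bound on $l_i$. With weight $3/2$ you only have $l_i^{3/2}u_i(y_i)^{1/7}=(l_i^{3/2}u_i(y_i))^{1/7}\,l_i^{9/7}$, and nothing prevents $l_i\ll R_i$. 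Combining with $l_i\le R_i$, as you suggest, points the wrong way.

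The weight $21/2$ you name does close everything, and the check is short. Take $s_i(x)=(R_i-d(x,x_i))^{21/2}u_i(x)$ and $l_i=R_i-d(y_i,x_i)$. Raising $s_i(y_i)\ge s_i(x_i)$ to the power $1/7$ gives, for $R_i\le1$,
\[ l_i^{3/2}u_i(y_i)^{1/7}\inf_M u_i\ \ge\ R_i^{3/2}u_i(x_i)^{1/7}\inf_M u_i\ \ge\ R_i^{3}\,u_i(x_i)^{1/7}\inf_M u_i\ \to\ +\infty . \]
We also have $u_i(y_i)\ge u_i(x_i)\to+\infty$ and $\inf_M u_i\le C$. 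Hence $\Lambda_i:=l_iu_i(y_i)^{2/3}=(l_i^{3/2}u_i(y_i))^{2/3}\to+\infty$. Take $\delta_i=\Lambda_i^{-1/2}$ and $L_i=\delta_i\Lambda_i$. Every point $\exp_{y_i}(z\,u_i(y_i)^{-2/3})$ with $|z|\le L_i$ then lies at distance at least $(1-\delta_i)l_i$ from $\partial B(x_i,R_i)$. Therefore $v_i\le\beta_i:=(1-\delta_i)^{-21/2}\to1$, and the limit argument is unchanged.

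The paper itself runs the weight-$3/2$ argument and does not derive the third item either. What weight $3/2$ does give is $u_i(y_i)^{1/7}\inf_M u_i\ge R_i^3u_i(x_i)^{1/7}\inf_M u_i\to+\infty$. That is the quantity the later moving-plane step actually uses, through $\mu_i$ and the final contradiction. So your diagnosis that the weight must change to get the statement as written is correct, but as submitted that item remains unproved.
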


{\bf Proof proposition 2.1:}

\bigskip

We use $ (H) $, we suppse that, there are $ R_i>0, R_i \to 0 $ et $ c_i \to +\infty $, such that,

$$ {R_i}^{3}(\sup_{B(x_0,R_i)} u_i)^{1/7} \inf_{M} u_i \geq c_i \to +\infty, $$

Let, $ x_i \in  { B(x_0,R_i)} $, such that $ \sup_{B(x_0,R_i)} u_i=u_i(x_i) $ and $ s_i(x)=[R_i-d(x,x_i)]^{3/2} u_i(x), x\in B(x_i, R_i) $. Then, $ x_i \to x_0 $.

\bigskip

We have,

$$ \max_{B(x_i,R_i)} s_i(x)=s_i(y_i) \geq s_i(x_i)={R_i}^{3/2} u_i(x_i)\geq \sqrt {c_i}  \to + \infty. $$ 

\bigskip

We set :

$$ l_i=R_i-d(y_i,x_i),\,\, \bar u_i(y)= u_i o \exp_{y_i}(y),\,\,  v_i(z)=\dfrac{u_i o \exp_{y_i}\left ( z/[u_i(y_i)]^{2/3} \right ) } {u_i(y_i)}. $$

Clearly, $ y_i \to x_0 $. We obtain:

$$ L_i= \dfrac{l_i}{(c_i)^{1/6}} [u_i(y_i)]^{2/3}=\dfrac{[s_i(y_i)]^{2/3}}{c_i^{1/6}}\geq \dfrac{c_i^{1/3}}{c_i^{1/6}}=c_i^{1/6}\to +\infty. $$

\bigskip

If $ |z|\leq L_i $, then $ y=\exp_{y_i}[z/ [u_i(y_i)]^{2/3}] \in B(y_i,\delta_i l_i) $ with $ \delta_i=\dfrac{1}{(c_i)^{1/6}} $ and $ d(y,y_i) < R_i-d(y_i,x_i) $, thus, $ d(y, x_i) < R_i $ and thus, $ s_i(y)\leq s_i(y_i) $, we can write,

$$ u_i(y) [R_i-d(y,x_i)]^{3/2} \leq u_i(y_i) (l_i)^{3/2}. $$

but, $ d(y,y_i) \leq \delta_i l_i $, $ R_i >l_i$ et $ R_i-d(y, x_i) \geq R_i-d(y_i,x_i)-\delta_i l_i>l_i-\delta_i l_i=l_i(1-\delta_i) $, we obtain,

$$ 0 < v_i(z)=\dfrac{u_i(y)}{u_i(y_i)} \leq \left [ \dfrac{l_i}{l_i(1-\delta_i)} \right ]^{3/2}\leq 2^{3/2} . $$

We set, $ \beta_i=\left ( \dfrac{1}{1-\delta_i} \right )^{3/2} $, clearly $ \beta_i \to 1 $.

\bigskip

The function $ v_i $ satisfies:

$$ -g^{jk}[\exp_{y_i}(y)]\partial_{jk} v_i-\partial_k \left [ g^{jk}\sqrt { |g| } \right ][\exp_{y_i}(y)]\partial_j v_i+ \dfrac{ h [\exp_{y_i}(y)]}{[u_i(y_i)]^{4/(n-2)}} v_i=n(n-2){v_i}^{N-1}, $$

We use elliptic estimates to have a subsequence, $ ( v_i)_i $ which converge uniformly on compact subsets of the euclidean space to a function $ v $ solution of, 

$$ \Delta v=15v^{7/3}, \,\, v(0)=1,\,\, 0 \leq v\leq 1\leq 2^{3/2}, $$

By the maximum principle, $ v>0 $ on $ {\mathbb R}^5 $. By a result of Caffarelli-Gidas-Spruck (see [10]), we have: $ v(y)=\left ( \dfrac{1}{1+{|y|^2}} \right )^{3/2} $.

\bigskip

\underbar {\bf Polar geodesic coordinates and moving-plane method}

\bigskip

We set:

$$ w_i(t,\theta)=e^{3t/2}\bar u_i(e^t,\theta) = e^{3t/2}u_io\exp_{y_i}(e^t\theta), \,\, {\rm and} \,\, a(y_i,t,\theta)=\log J(y_i,e^t,\theta). $$ 

\begin{Lemma}

\smallskip

La fonction $ w_i $ est solution de:

$$  -\partial_{tt} w_i-\partial_t a \partial_t w_i+\Delta_{\theta}w_i+c w_i=15 w_i^{7/3}, $$
 
with,

 $$ c = c(y_i,t,\theta)=\left ( \dfrac{3}{2} \right )^2+ \dfrac{3}{2} \partial_t a + h e^{2t}, $$ 

\end{Lemma}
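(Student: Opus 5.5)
We compute directly, writing the Laplace--Beltrami operator in geodesic polar coordinates centered at $ y_i $. In normal coordinates around $ y_i $, with $ r=|y| $ and $ \theta\in {\mathbb S}^4 $, the metric is $ g=dr^2+r^2\tilde g_{r,\theta} $ and the volume element is $ r^4 J(y_i,r,\theta)\,dr\,d\theta $, where $ J $ is the Jacobian of $ \exp_{y_i} $ relative to the Euclidean volume (so $ J=1+O(r^2) $). With the sign convention $ \Delta=-\nabla^i\nabla_i $, the radial part gives
$$ \Delta \bar u_i=-\partial_{rr}\bar u_i-\left(\frac{4}{r}+\frac{\partial_r J}{J}\right)\partial_r \bar u_i+\frac{1}{r^2}\Delta_{\theta}\bar u_i , $$
where $ \Delta_\theta $ is the (nonnegative, in the paper's sign convention) angular Laplace-type operator on the geodesic sphere, rescaled to the unit sphere. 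Thus the equation $ \Delta \bar u_i+h\bar u_i=15\bar u_i^{7/3} $ holds in these coordinates.

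Next we change variables, setting $ r=e^t $ and $ \bar u_i(r,\theta)=e^{-3t/2}w_i(t,\theta) $. We have $ \partial_r=e^{-t}\partial_t $ and $ \partial_{rr}=e^{-2t}(\partial_{tt}-\partial_t) $. Also $ \partial_t a=\partial_t\log J=r\,\partial_rJ/J $. Substituting, and multiplying the whole equation by $ e^{2t}\cdot e^{3t/2}=e^{7t/2} $, we compute
$$ \partial_t(e^{-3t/2}w)=e^{-3t/2}\left(\partial_t w-\tfrac32 w\right), \qquad \partial_{tt}(e^{-3t/2}w)=e^{-3t/2}\left(\partial_{tt}w-3\partial_t w+\tfrac94 w\right). $$
The operator $ r^2(\partial_{rr}+\tfrac4r\partial_r)=\partial_{tt}+3\partial_t $, applied to $ e^{-3t/2}w $, gives $ e^{-3t/2}(\partial_{tt}w-\tfrac94 w) $; the first-order terms cancel exactly because $ 3/2=(n-2)/2 $ with $ n=5 $. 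The extra term $ \partial_t a\,\partial_t(e^{-3t/2}w) $ gives $ e^{-3t/2}(\partial_t a\,\partial_t w-\tfrac32\partial_t a\, w) $, and $ h r^2 \bar u_i $ gives $ he^{2t}e^{-3t/2}w $.

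On the right-hand side, $ e^{2t}\cdot 15(e^{-3t/2}w)^{7/3}=15e^{2t-7t/2}w^{7/3}=15e^{-3t/2}w^{7/3} $, which is consistent because the exponent $ 7/3=N-1 $ is critical. Multiplying through by $ e^{3t/2} $ and collecting terms yields
$$ -\partial_{tt}w_i-\partial_t a\,\partial_t w_i+\Delta_\theta w_i+\left(\tfrac94+\tfrac32\partial_t a+he^{2t}\right)w_i=15w_i^{7/3}, $$
which is exactly the claimed equation with $ c=(3/2)^2+\tfrac32\partial_t a+he^{2t} $.

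The only delicate points are the bookkeeping of signs, given $ \Delta=-\nabla^i\nabla_i $ and the convention for $ \Delta_\theta $, and correctly identifying the mean-curvature term of geodesic spheres as $ \tfrac4r+\partial_r\log J $. That term is the main step to verify. It follows from $ \Delta f=-\frac{1}{\sqrt{|g|}}\partial_j(\sqrt{|g|}g^{jk}\partial_k f) $ with $ \sqrt{|g|}=r^4J $ in polar coordinates, with $ g^{rr}=1 $ and $ g^{r\theta}=0 $ by the Gauss lemma.
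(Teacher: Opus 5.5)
Your proof is correct. Writing $\Delta$ in geodesic polar coordinates as $-\partial_{rr}-(\frac{4}{r}+\partial_r\log J)\partial_r+r^{-2}\Delta_\theta$ and substituting $r=e^t$, $\bar u_i=e^{-3t/2}w_i$ gives exactly $c=\frac94+\frac32\partial_t a+he^{2t}$; the paper gives no computation and only writes ``See [3]'', and this direct change of variables is presumably what that reference carries out. One cosmetic point: you announce a multiplication by $e^{7t/2}$ and then multiply by $e^{3t/2}$ again at the end, whereas the computation you actually perform is a single multiplication by $r^2=e^{2t}$ followed by one by $e^{3t/2}$.
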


{\bf Proof of the Lemma}. See [3].

\bigskip

We write, $ \partial_t a=\dfrac{ \partial_t b_1}{b_1} $, $ b_1(y_i,t,\theta)=J(y_i,e^t,\theta)>0 $,

\bigskip

and,

$$ -\dfrac{1}{\sqrt {b_1}}\partial_{tt} (\sqrt { b_1} w_i)+\Delta_{\theta}w_i+[c(t)+ b_1^{-1/2} b_2(t,\theta)]w_i=15{w_i}^{7/3}, $$

o\`u, $ b_2(t,\theta)=\partial_{tt} (\sqrt {b_1})=\dfrac{1}{2 \sqrt { b_1}}\partial_{tt}b_1-\dfrac{1}{4(b_1)^{3/2}}(\partial_t b_1)^2 .$ 

\bigskip

We set,

$$ \tilde w_i=\sqrt {b_1} w_i, $$

\begin{Lemma}

The function $ \tilde w_i $ is solutions of:

$$ -\partial_{tt} \tilde w_i+\Delta_{\theta} (\tilde w_i)+2\nabla_{\theta}(\tilde  w_i) .\nabla_{\theta} \log (\sqrt {b_1})+(c+b_1^{-1/2} b_2-c_2) \tilde w_i= $$

$$ = 15\left (\dfrac{1}{b_1} \right )^{3/2} {\tilde w_i}^{7/3}, $$

with $ c_2 $ a function.

\end{Lemma}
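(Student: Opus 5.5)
We substitute $w_i = b_1^{-1/2}\tilde w_i$ into the equation obtained just above,
$$ -\dfrac{1}{\sqrt{b_1}}\partial_{tt}(\sqrt{b_1} w_i)+\Delta_{\theta}w_i+[c+b_1^{-1/2}b_2]w_i=15 w_i^{7/3}, $$
and multiply through by $\sqrt{b_1}$. The first term becomes $-\partial_{tt}\tilde w_i$ directly. The right-hand side becomes $15\sqrt{b_1}\,b_1^{-7/6}\tilde w_i^{7/3}=15\,b_1^{-2/3}\tilde w_i^{7/3}$.

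The right-hand side exponent needs a check: with $N-1=7/3$ one gets $b_1^{1/2-7/6}=b_1^{-2/3}$, not $b_1^{-3/2}$. So either the intended convention differs, or the power of $1/b_1$ in the statement is a typo. Only the form "positive smooth function of $(t,\theta)$ times $\tilde w_i^{7/3}$" matters for the moving-plane argument, so I would record the computed factor and note the discrepancy.

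The main step is the angular term $\sqrt{b_1}\,\Delta_\theta(b_1^{-1/2}\tilde w_i)$. Write $\phi=\log\sqrt{b_1}$, so $b_1^{-1/2}=e^{-\phi}$. With $\Delta_\theta=-\nabla^\theta\cdot\nabla_\theta$ (the paper's sign convention), the product rule gives
$$ e^{\phi}\Delta_\theta(e^{-\phi}\tilde w_i)=\Delta_\theta\tilde w_i+2\nabla_\theta\tilde w_i\cdot\nabla_\theta\phi+\big(\Delta_\theta\phi\cdot(-1)\cdot(-1)\ \text{terms}\big)\tilde w_i . $$
Collecting all zeroth-order terms, with careful signs, yields $-c_2\tilde w_i$, where
$$ c_2=|\nabla_\theta\phi|^2+\Delta_\theta\phi $$
up to sign conventions. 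In particular $c_2$ is a smooth function built from $b_1$ and its angular derivatives up to second order. The gradient cross term is exactly $2\nabla_\theta\tilde w_i\cdot\nabla_\theta\log\sqrt{b_1}$, with the sign fixed by the convention $\Delta=-\nabla^i\nabla_i$. The potential term $[c+b_1^{-1/2}b_2]w_i$ simply becomes $[c+b_1^{-1/2}b_2]\tilde w_i$.

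The only real obstacle is the bookkeeping of signs in this angular expansion. The statement leaves $c_2$ unspecified, so any consistent computation suffices. For later use I would also note that, since $b_1=J(y_i,e^t,\theta)=1+O(e^{2t})$ with $C^2$ bounds, the functions $c_2$, $\nabla_\theta\log\sqrt{b_1}$ and $b_2$ are all $O(e^{2t})$ as $t\to-\infty$.
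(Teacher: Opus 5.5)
Your proposal is correct and is the same direct computation that the paper delegates to [3]: substitute $w_i=b_1^{-1/2}\tilde w_i$, multiply by $\sqrt{b_1}$, and expand $\sqrt{b_1}\,\Delta_\theta(b_1^{-1/2}\tilde w_i)$ by the product rule, which yields exactly the cross term $2\nabla_\theta\tilde w_i\cdot\nabla_\theta\log\sqrt{b_1}$. Your exponent check is also right: the coefficient is $15\,b_1^{-2/3}=15\,(1/b_1)^{(N-2)/2}$, so $(1/b_1)^{3/2}$ in the statement is a typo. The $c_2=\Delta_\theta\phi+|\nabla_\theta\phi|^2=b_1^{-1/2}\Delta_\theta\sqrt{b_1}+2|\nabla_\theta\log\sqrt{b_1}|^2$ you state, though derived sketchily, is exactly the correct zeroth-order term, whereas the paper's displayed $c_2$ has coefficient $1$ on the last summand. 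Neither discrepancy affects the later moving-plane argument, which only uses that the nonlinear coefficient is a smooth positive function equal to $1+O(e^{2t})$ and that $c_2=O(e^{2t})$.
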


\bigskip

{\bf Proof of the Lemma}. See [3] with $ c_2=[\dfrac{1}{\sqrt {b_1}} \Delta_{\theta}(\sqrt{b_1}) + |\nabla_{\theta} \log (\sqrt {b_1})|^2] . $

\bigskip

\underbar {\bf The moving-plane method:}

\bigskip

Let $ \xi_i $ a number, we suppose $ \xi_i \leq t $, we set $ t^{\xi_i}=2\xi_i-t $ and $ \tilde w_i^{\xi_i}(t,\theta)=\tilde w_i(t^{\xi_i},\theta) $.

\begin{Proposition}

We have:

$$ 1)\,\,\, \tilde w_i(\lambda_i,\theta)-\tilde w_i(\lambda_i+4,\theta) \geq \tilde k>0, \,\, \forall \,\, \theta \in {\mathbb S}_{4}. $$

 For all $ \beta >0 $, there is $ c_{\beta} >0 $ such that:

$$ 2) \,\,\, \dfrac{1}{c_{\beta}} e^{3t/2}\leq \tilde w_i(\lambda_i+t,\theta) \leq c_{\beta}e^{3t/2}, \,\, \forall \,\, t\leq \beta, \,\, \forall \,\, \theta \in {\mathbb S}_{4}. $$

\end{Proposition}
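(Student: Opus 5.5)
Here $\lambda_i$ is understood as $\lambda_i=-\tfrac{2}{3}\log u_i(y_i)$, so that $e^{\lambda_i}=[u_i(y_i)]^{-2/3}$ is the blow-up scale of Proposition 2.1; this is the convention of [3]. Both assertions will come from rewriting $\tilde w_i(\lambda_i+t,\theta)$ in terms of the rescaled functions $v_i$ and then using the convergence of Proposition 2.1.

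\textbf{The identity.} By definition,
$$ \tilde w_i(\lambda_i+t,\theta)=\sqrt{b_1(y_i,\lambda_i+t,\theta)}\; e^{3(\lambda_i+t)/2}\, u_i\circ\exp_{y_i}(e^{\lambda_i}e^t\theta). $$
Since $e^{3\lambda_i/2}=u_i(y_i)^{-1}$, the factor $e^{3\lambda_i/2}u_i\circ\exp_{y_i}(e^{\lambda_i}e^t\theta)$ equals $v_i(e^t\theta)$. Hence
$$ \tilde w_i(\lambda_i+t,\theta)=\sqrt{b_1(y_i,\lambda_i+t,\theta)}\, e^{3t/2}\, v_i(e^t\theta). $$
Because $\lambda_i\to-\infty$ and $t\le\beta$, we have $e^{\lambda_i+t}\to0$ uniformly, so $b_1=J(y_i,e^{\lambda_i+t},\theta)\to1$ uniformly. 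Here I use that $J(y,r,\theta)=1+O(r^2)$ uniformly for $y$ near $x_0$.

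\textbf{Part 2), upper bound.} For $t\le\beta$ the point $e^t\theta$ lies in the ball $|z|\le e^\beta$. Once $i$ is large, $e^\beta\le L_i$, so Proposition 2.1 gives $v_i\le 2^{3/2}$ there. This yields $\tilde w_i(\lambda_i+t,\theta)\le C e^{3t/2}$.

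\textbf{Part 2), lower bound.} On the compact ball $|z|\le e^\beta$, the uniform convergence $v_i\to(1+|z|^2)^{-3/2}$ gives $v_i\ge \tfrac12(1+e^{2\beta})^{-3/2}$ for $i$ large. The remaining finitely many $i$ can be absorbed into the constant, or we simply pass to a subsequence as is customary. This gives $\tilde w_i(\lambda_i+t,\theta)\ge c_\beta^{-1}e^{3t/2}$.

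\textbf{Part 1).} Using the identity, the limit of $\tilde w_i(\lambda_i,\theta)-\tilde w_i(\lambda_i+4,\theta)$ is
$$ v(1)-e^{6}v(e^4)=2^{-3/2}-\frac{e^6}{(1+e^8)^{3/2}}=2^{-3/2}-(e^{-4}+e^{4})^{-3/2}, $$
and the convergence is uniform in $\theta$. The right-hand side is strictly positive, because $e^{-4}+e^{4}>2$. We can therefore take $\tilde k$ to be half of this number.

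\textbf{Main obstacle.} The one point needing care is the uniformity of $b_1\to1$, which in turn requires $\lambda_i\to-\infty$. This follows from $u_i(y_i)\to\infty$, which holds because $s_i(y_i)\to\infty$ while $l_i\le R_i\to0$. We also need the expansion $J=1+O(r^2)$ to be uniform in $y_i$ near $x_0$, which holds by smoothness of $g$. The second point is that the bound in 2) must hold for all $t\le\beta$, including $t\to-\infty$. This is fine: $v_i$ is bounded above on the whole ball $|z|\le e^\beta$ by Proposition 2.1, and bounded below there by continuity of the limit together with uniform convergence on that compact set.
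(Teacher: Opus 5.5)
Your argument is correct and is essentially the one the paper relies on; the paper only says ``see [3]''. You rewrite $\tilde w_i(\lambda_i+t,\theta)=\sqrt{b_1}\,e^{3t/2}v_i(e^t\theta)$, use $b_1\to 1$ uniformly because $e^{\lambda_i+\beta}\to 0$, and pass to the limit profile $e^{3t/2}(1+e^{2t})^{-3/2}=(2\cosh t)^{-3/2}$, which gives both the two-sided bound for $t\le\beta$ and the strict gap between $t=0$ and $t=4$ (with $\tilde k$ and $c_\beta$ obtained for $i$ large after extracting a subsequence, which is all the later moving-plane argument uses).
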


{\bf Proof of the proposition}. See [3].

\bigskip

We set:

$$ \bar Z_i=-\partial_{tt} (...)+\Delta_{\theta} (...)+2\nabla_{\theta}(...) .\nabla_{\theta} \log (\sqrt {b_1})+(c+b_1^{-1/2} b_2-c_2)(...) $$

{\bf Remark:} In the operator $ \bar Z_i $ we have: $ c+b_1^{-1/2}b_2-c_2 $ v\'erifie:

$$ c+b_1^{-1/2}b_2-c_2 \geq k'>0,\,\, {\rm pour }\,\, t<<0, $$

thus we can apply the Hopf maximum principle.

\bigskip

\underbar {\bf Goal:}

\bigskip

Like in [2,3], we want to prove that:

$$ \bar Z_i(\bar w_i^{\xi_i}-\bar w_i) \leq 0, \,\, {\rm si} \,\, \bar w_i^{\xi_i}-\bar w_i \leq 0. $$

with, $ \bar w_i $, a particular function obtained from $ w_i $.

We have:

\smallskip

\begin{Lemma}

$$ b_1(y_i,t,\theta)=1-\dfrac{1}{6} Ricci_{y_i}(\theta,\theta)e^{2t}+\ldots, $$

$$ h(e^t\theta)=h(y_i) + <\nabla h(y_i)|\theta > e^t+\dots . $$

\end{Lemma}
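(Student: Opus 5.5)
The plan is to expand both quantities in normal coordinates centered at $y_i$; both expansions are local Taylor computations.

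\emph{First formula (the volume density).} Recall that $b_1(y_i,t,\theta)=J(y_i,e^t,\theta)$, where $J(y_i,r,\theta)$ is the density of the Riemannian volume in polar geodesic coordinates about $y_i$, divided by the Euclidean factor $r^{n-1}$. Equivalently, $J(y_i,r,\theta)=\sqrt{|g|}\,(\exp_{y_i}(r\theta))$, computed in normal coordinates at $y_i$.

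\begin{enumerate}
\item Write the metric in normal coordinates:
$$ g_{jk}(x)=\delta_{jk}-\tfrac13 R_{jlkm}(y_i)\,x^lx^m+O(|x|^3). $$
\item Take determinants, using $\det(I+A)=1+\mathrm{tr}A+O(|A|^2)$:
$$ |g|(x)=1-\tfrac13 Ricci_{y_i}(x,x)+O(|x|^3). $$
\item Take the square root:
$$ \sqrt{|g|}(x)=1-\tfrac16 Ricci_{y_i}(x,x)+O(|x|^3). $$
\item Set $x=e^t\theta$ with $|\theta|=1$. Since $Ricci$ is quadratic, this gives
$$ b_1(y_i,t,\theta)=1-\tfrac16 Ricci_{y_i}(\theta,\theta)e^{2t}+O(e^{3t}). $$
\end{enumerate}

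Uniformity in $i$ is needed because the base point $y_i$ moves. It holds because $y_i\to x_0$ stays in a compact set and the metric $g$ is smooth. As a result, the $O(e^{3t})$ remainder, together with its $t$- and $\theta$-derivatives, is bounded uniformly in $i$ for $t\le 0$.

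One point needs care. The Jacobi-field expansion can also be used, and it involves the sectional curvatures $K(\theta,e_j)$. Summing these over an orthonormal frame $e_j$ of $\theta^{\perp}$ gives $Ricci(\theta,\theta)$. Both routes produce the coefficient $-\tfrac16$. This is the only step where a sign or constant could go wrong, so I would double-check it against the round sphere. There $J=(\sin r/r)^{n-1}=1-\tfrac{n-1}{6}r^2+\dots$ and $Ricci(\theta,\theta)=n-1$, which confirms the coefficient.

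\emph{Second formula ($h$).} Here $h(e^t\theta)$ means $h\circ\exp_{y_i}(e^t\theta)$. Apply Taylor's formula to the smooth function $h\circ\exp_{y_i}$ at $0$. Its differential at $0$ equals $dh(y_i)$, because $d(\exp_{y_i})_0=\mathrm{Id}$. This yields
$$ h(e^t\theta)=h(y_i)+\langle\nabla h(y_i)|\theta\rangle e^t+O(e^{2t}). $$
The remainder is controlled by $\|h\|_{C^2(M)}\le h_0$ and by the uniform bounds on the exponential map near $x_0$. This control is what lets the later moving-plane estimates use these expansions with constants independent of $i$.
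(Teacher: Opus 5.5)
Your proposal is correct. The paper states this lemma without any proof, relying implicitly on the standard normal-coordinate expansion $\sqrt{|g|}(x)=1-\frac16 Ricci_{y_i}(x,x)+O(|x|^3)$ and the first-order Taylor formula for $h\circ\exp_{y_i}$. That is exactly what you carry out, including the uniformity in $y_i$ that the later bounds $|\partial_t b_1|+|\partial_{tt}b_1|\leq Ce^{2t}$ depend on.

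One small correction: state the uniform remainder bounds for $t\leq\log\epsilon_1$, as the paper does, rather than for all $t\leq 0$. Here $\epsilon_1$ is below a uniform lower bound for the injectivity radius near $x_0$, so polar geodesic coordinates about $y_i$ are valid on that range; for $t\leq 0$ you would need that radius to be at least $1$.
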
.

and,

\begin{Proposition}

$$ \bar Z_i(\tilde w_i^{\xi_i}-\tilde w_i) \leq {b_1}^{-3/2}[(\tilde w_i^{\xi_i})^{7/3}- \tilde w_i^{7/3}]+  $$

$$ +C|e^{2t}-e^{2t^{\xi_i}}|\left [|\nabla_{\theta} {\tilde w_i}^{\xi_i}| + |\nabla_{\theta}^2(\tilde w_i^{\xi_i})|+ |Ricci_{y_i}|[\tilde w_i^{\xi_i}+\tilde w_i^{\xi_i})^{7/3}] + |h| \tilde w_i^{\xi_i} \right ] + C'|e^{3t^{\xi_i}}-e^{3t}|. $$

\end{Proposition}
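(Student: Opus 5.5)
We apply the operator $\bar Z_i$ to $\tilde w_i$ and to its reflection $\tilde w_i^{\xi_i}$, and subtract. By the second Lemma, $\bar Z_i(\tilde w_i)=15\,b_1^{-3/2}(t,\theta)\,\tilde w_i^{7/3}$. The reflected function $\tilde w_i^{\xi_i}(t,\theta)=\tilde w_i(t^{\xi_i},\theta)$ satisfies the same equation, but with every coefficient evaluated at $t^{\xi_i}$ instead of $t$, because $\partial_{tt}$ is invariant under $t\mapsto 2\xi_i-t$. Writing $\bar Z_i^{\xi_i}$ for the operator with coefficients at $t^{\xi_i}$, we have
$$
\bar Z_i(\tilde w_i^{\xi_i})=\bar Z_i^{\xi_i}(\tilde w_i^{\xi_i})+(\bar Z_i-\bar Z_i^{\xi_i})(\tilde w_i^{\xi_i}),
$$
and $\bar Z_i^{\xi_i}(\tilde w_i^{\xi_i})=15\,b_1^{-3/2}(t^{\xi_i},\theta)(\tilde w_i^{\xi_i})^{7/3}$. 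The first piece is then rewritten as $b_1^{-3/2}(t)(\tilde w_i^{\xi_i})^{7/3}$ plus the error $[b_1^{-3/2}(t^{\xi_i})-b_1^{-3/2}(t)](\tilde w_i^{\xi_i})^{7/3}$. (The constant $15$ can be absorbed by normalization, as in the statement.)

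The differences of coefficients are controlled by the Lemma giving the expansions of $b_1$ and $h$. Since $b_1=1-\frac16 Ricci_{y_i}(\theta,\theta)e^{2t}+O(e^{3t})$, uniformly in $i$ because $y_i\to x_0$ and the metric is smooth, the following quantities are all $O(e^{2t})$ together with their $\theta$-derivatives: $\log\sqrt{b_1}$, $\nabla_\theta\log\sqrt{b_1}$, $c_2$, and $b_1^{-1/2}b_2$. Each differs between $t$ and $t^{\xi_i}$ by $C|e^{2t}-e^{2t^{\xi_i}}|$. We treat the terms as follows.
\begin{itemize}
\item The gradient term $2\nabla_\theta(\cdot)\cdot\nabla_\theta\log\sqrt{b_1}$ gives $C|e^{2t}-e^{2t^{\xi_i}}|\,|\nabla_\theta\tilde w_i^{\xi_i}|$.
\item The $c_2$ part involves $\Delta_\theta\sqrt{b_1}$ and the Ricci coefficient, and gives the $|Ricci_{y_i}|\,\tilde w_i^{\xi_i}$ contribution. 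Where second angular derivatives appear, they give $|\nabla^2_\theta\tilde w_i^{\xi_i}|$.
\item The $b_1^{-3/2}$ difference gives $C|Ricci_{y_i}||e^{2t}-e^{2t^{\xi_i}}|(\tilde w_i^{\xi_i})^{7/3}$.
\end{itemize}

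In $c=\frac94+\frac32\partial_t a+he^{2t}$, the term $\partial_t a=\partial_t b_1/b_1$ is again $-\frac13 Ricci(\theta,\theta)e^{2t}+O(e^{3t})$, which produces Ricci-type terms of size $|e^{2t}-e^{2t^{\xi_i}}|$. The term $h(e^t\theta)e^{2t}$ splits into $h(y_i)(e^{2t}-e^{2t^{\xi_i}})$, bounded by $|h|\,|e^{2t}-e^{2t^{\xi_i}}|\,\tilde w_i^{\xi_i}$, plus remainders. The remainders come from $\langle\nabla h(y_i),\theta\rangle e^{3t}$ and from the $O(e^{3t})$ parts of all expansions. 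These are multiplied by $\tilde w_i^{\xi_i}$ or its powers, which are bounded for $t\le\lambda_i+\beta$ by part 2) of Proposition 2.4. They give the final term $C'|e^{3t^{\xi_i}}-e^{3t}|$; here we use the mean value theorem together with the boundedness of $\tilde w_i^{\xi_i}$, and the $C^2$ bound $h_0$ on $h$.

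The main obstacle is bookkeeping rather than conceptual. The delicate points are:
\begin{itemize}
\item checking that the $O(e^{3t})$ remainders really come as a difference $|e^{3t}-e^{3t^{\xi_i}}|$ rather than a bare $e^{3t}$, which requires Taylor expansion to third order, uniformly in $\theta$ and $i$;
\item keeping the zeroth-order coefficient's Ricci part separate, since it must later cancel or be dominated in the moving-plane argument.
\end{itemize}
Throughout we restrict to $\xi_i\le t\le\lambda_i+\beta$ with $t\ll0$, where the expansions and the Remark on positivity of the coefficient hold.
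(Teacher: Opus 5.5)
Your reduction is right: you write $\bar Z_i(\tilde w_i^{\xi_i})$ as the reflected operator applied to $\tilde w_i^{\xi_i}$, plus differences of coefficients at $t^{\xi_i}$ and $t$, each of size $|e^{2t}-e^{2t^{\xi_i}}|$. But you leave out the one difference the paper actually writes out, and it is exactly where $|\nabla_\theta^2\tilde w_i^{\xi_i}|$ comes from. In geodesic polar coordinates $\Delta_\theta$ is not the round Laplacian of $\mathbb S_4$. In a chart $U^k$,
\[
\Delta_\theta=-\frac{1}{\sqrt{|\tilde g^k(e^t,\theta)|}}\,\partial_{\theta^l}\left[\tilde g^{\theta^l\theta^j}(e^t,\theta)\sqrt{|\tilde g^k(e^t,\theta)|}\,\partial_{\theta^j}\right],
\]
so it depends on $t$, and your difference contains $A_i=[(\Delta_\theta)^{\xi_i}-\Delta_\theta]\tilde w_i^{\xi_i}$. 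The paper splits $A_i=B_i+D_i$:
\begin{itemize}
\item $B_i=[\tilde g^{\theta^l\theta^j}(e^{t^{\xi_i}},\theta)-\tilde g^{\theta^l\theta^j}(e^t,\theta)]\,\partial_{\theta^l\theta^j}\tilde w_i^{\xi_i}$ carries the second derivatives;
\item $D_i$ is the difference of the first-order coefficients times $\partial_{\theta^j}\tilde w_i^{\xi_i}$.
\end{itemize}
This gives $A_i\le C|e^{2t}-e^{2t^{\xi_i}}|\,(|\nabla_\theta\tilde w_i^{\xi_i}|+|\nabla^2_\theta\tilde w_i^{\xi_i}|)$, with $C$ the maximum over a finite atlas.

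Your list of $O(e^{2t})$ quantities is built only from $b_1$ and $h$. Since $b_1=J$ is a determinant, it does not control the individual entries $\tilde g^{\theta^l\theta^j}$. You need the normal-coordinate expansion of the metric itself: $\tilde g(r,\theta)=g_{\mathbb S_4}+O(r^2)$, smoothly in $r=e^t$, together with its $\theta$-derivatives. Your attribution of the second-derivative term to the $c_2$ part is also wrong. The quantity $c_2=\frac{1}{\sqrt{b_1}}\Delta_\theta\sqrt{b_1}+|\nabla_\theta\log\sqrt{b_1}|^2$ is a zeroth-order coefficient, so its difference only multiplies $\tilde w_i^{\xi_i}$ and can never produce derivatives of $\tilde w_i^{\xi_i}$.

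Two smaller points. First, you restrict to $\xi_i\le t\le\lambda_i+\beta$ to get boundedness, but the proposition is used on $[\xi_i,t_i]$, where $t_i-\lambda_i=\frac{2}{3}(1-\alpha)\log u_i(y_i)\to+\infty$. What must be bounded is $\tilde w_i^{\xi_i}(t,\theta)=\tilde w_i(2\xi_i-t,\theta)$. This holds for every $t\ge\xi_i$, because $2\xi_i-t\le\xi_i\le\lambda_i+2$, so part 2) of Proposition 2.4 applies with $\beta=2$; no upper restriction on $t$ is needed.

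Second, the factor $15$ cannot be normalized away, since the equation is fixed. Dropping it is harmless only because $15X\le X$ when $X=(\tilde w_i^{\xi_i})^{7/3}-\tilde w_i^{7/3}\le 0$, which is the only situation in which the inequality is used.
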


{\bf Proof of the proposition}. See [3].

\bigskip

We have:

$$ a(y_i,t,\theta)=\log J(y_i,e^t,\theta)=\log b_1, |\partial_t b_1(t)|+|\partial_{tt} b_1(t)|+|\partial_{tt} a(t)|\leq C e^{2t}, $$

and,

$$ |\partial_{\theta_j} b_1|+|\partial_{\theta_j,\theta_k} b_1|+\partial_{t,\theta_j}b_1|+|\partial_{t,\theta_j,\theta_k} b_1|\leq C e^{2t}, $$

thus,

$$ |\partial_t b_1(t^{\xi_i})-\partial_t b_1(t)|\leq C'|e^{2t}-e^{2t^{\xi_i}}|,\,\, {\rm sur} \,\, ]-\infty, \log \epsilon_1]\times {\mathbb S}_{4},\forall \,\, x\in B(x_0,\epsilon_1) $$

locally,

$$ \Delta_{\theta}=-\dfrac{1}{\sqrt {|{\tilde g}^k(e^t,\theta)|}}\partial_{\theta^l}[{\tilde g}^{\theta^l \theta^j}(e^t,\theta)\sqrt { |{\tilde g}^k(e^t,\theta)|}\partial_{\theta^j}] . $$

Thus, in the chart$ ]0,\epsilon_1[\times U^k $, we have,

$$ A_i=\left [{ \left [ \dfrac{1}{\sqrt {|{\tilde g}^k|}}\partial_{\theta^l}({\tilde g}^{\theta^l \theta^j}\sqrt { |{\tilde g}^k|}\partial_{\theta^j}) \right ] }^{\xi_i}- \dfrac{1}{\sqrt {|{\tilde g}^k|}}\partial_{\theta^l}({\tilde g}^{\theta^l \theta^j}\sqrt { |{\tilde g}^k|}\partial_{\theta^j}) \right ](\tilde w_i^{\xi_i}) $$

thus, $ A_i=B_i+D_i $ with,

$$ B_i=\left [ {\tilde g}^{\theta^l \theta^j}(e^{t^{\xi_i}}, \theta)-{\tilde g}^{\theta^l \theta^j}(e^t,\theta) \right ] \partial_{\theta^l \theta^j}\tilde w_i^{\xi_i}(t,\theta), $$

and,

$$ D_i=\left [ \dfrac{1}{ \sqrt {| {\tilde g}^k|}(e^{t^{\xi_i}},\theta )  }           \partial_{\theta^l}[{\tilde g }^{\theta^l \theta^j}(e^{t^{\xi_i}},\theta)\sqrt {| {\tilde g}^k|}(e^{t^{\xi_i}},\theta)  ] -\dfrac{1}{ \sqrt {| {\tilde g}^k|}(e^t,\theta) } \partial_{\theta^l} [{\tilde g }^{\theta^l \theta^j}(e^t,\theta)\sqrt {| {\tilde g}^k|}(e^t,\theta) ] \right ] \partial_{\theta^j} \tilde w_i^{\xi_i}(t,\theta), $$

thus,

$$ A_i \leq C_k|e^{2t}-e^{2t^{\xi_i}}|\left [ |\nabla_{\theta} \tilde w_i^{\xi_i}| + |\nabla_{\theta}^2(\tilde w_i^{\xi_i})| \right ], $$

We take $ C=\max \{ C_i, 1 \leq i\leq q \} $.

\bigskip

We have

$$ c(y_i,t,\theta)=\left ( \dfrac{3}{2} \right )^2+ \dfrac{3}{2} \partial_t a + h e^{2t}, \qquad (\alpha_1) $$ 

$$ b_2(t,\theta)=\partial_{tt} (\sqrt {b_1})=\dfrac{1}{2 \sqrt { b_1}}\partial_{tt}b_1-\dfrac{1}{4(b_1)^{3/2}}(\partial_t b_1)^2 ,\qquad (\alpha_2) $$ 

$$ c_2=[\dfrac{1}{\sqrt {b_1}} \Delta_{\theta}(\sqrt{b_1}) + |\nabla_{\theta} \log (\sqrt {b_1})|^2], \qquad (\alpha_3) $$

then,

$$ \partial_{t}c(y_i,t,\theta)=\dfrac{(n-2)}{2}\partial_{tt}a+2e^{2t}h(e^t\theta)+e^{3t}<\nabla h(e^t\theta)|\theta >, $$

and then,

$$ |\partial_tc_2|+|\partial_t b_1|+|\partial_t b_2|+|\partial_t c|\leq K_1e^{2t}, $$

We know thta $ 0 < v_i(y) \leq 2^{3/2} $, by the elliptic estimates we obtain,

 $$ ||\nabla v_i||_{L^{\infty}(B(0,R)}+||\nabla^2 v_i||_{L^{\infty}(B(0,R)} \leq C(R,n). \qquad (***2) $$

We are in dimension $ 5 $. We consider the function $ \bar w_i(t,\theta)=\tilde w_i(t,\theta)-\dfrac{[u_i(y_i)]^{\alpha /3 } \min_M u_i}{2} e^{2t} $.

\bigskip

With $ t \leq t_i=-\dfrac{2 \alpha}{3} \log u_i(y_i) $, we have: 

$$ \bar w_i(t,\theta) = e^{2t}\left [\sqrt{b_1}(t,\theta) e^{-t/2}u_i o \exp_{y_i}(e^t\theta)-\dfrac{[u_i(y_i)]^{\alpha /3} \min_M u_i}{2} \right ] \geq $$

$$ \geq e^{2t} \dfrac{[u_i(y_i)]^{\alpha /3} \min_M u_i}{2}>0, $$

We set, $ \mu_i=\dfrac{[u_i(y_i)]^{\alpha /3 } \min_M u_i}{2} $.

\bigskip

Like in[2,3] we have: 

\begin{Lemma}

There is $ \nu <0 $ such that for $ \lambda \leq \nu $ :

$$ \bar w_i^{\lambda}(t,\theta)-\bar w_i(t,\theta) \leq 0, \,\, \forall \,\, (t,\theta) \in [\lambda,t_i] \times {\mathbb S}_{4}, $$

\end{Lemma}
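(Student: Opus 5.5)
This is the usual "starting position" of the moving-plane method in the spirit of [2,3]. I will show that for $\lambda$ very negative the reflected function lies below $\bar w_i$ on $[\lambda,t_i]\times{\mathbb S}_4$, and I will split the interval into a part near $t_i$ and a part far from it.

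The main tool is Proposition 2.4, 2). With $\beta$ fixed and $\lambda_i=-\log u_i(y_i)^{2/3}$ (so that $\tilde w_i$ is comparable to the bubble), it gives $\tilde w_i(\lambda_i+t,\theta)\asymp e^{3t/2}$ for $t\le\beta$. Written back in $t$, this reads $\tilde w_i(t,\theta)\asymp e^{3t/2}u_i(y_i)^{...}$ wherever $t\le\lambda_i+\beta$. I also use the explicit positivity already derived: for $t\le t_i$,
$$\bar w_i\ge \mu_i e^{2t}>0 .$$

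\textbf{Step 1: the region near $t_i$.} Take $t\in[\lambda,t_i]$ with $t$ close to $t_i$. Then $t^\lambda=2\lambda-t\le\lambda$ is very negative. Hence
$$\bar w_i^\lambda\le\tilde w_i(t^\lambda)\le C e^{3t^\lambda/2}u_i(y_i)^{\cdots},$$
and this is tiny compared with the lower bound $\mu_i e^{2t}$ for $\bar w_i(t)$.

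\textbf{Step 2: the region where $t$ and $t^\lambda$ are both very negative.} Here I compare the two sides using the structure
$$\bar w_i=e^{2t}\bigl[\sqrt{b_1}\,e^{-t/2}u_i\circ\exp_{y_i}(e^t\theta)-\mu_i\bigr].$$
As $t\to-\infty$ the bracket tends to $+\infty$, so $\bar w_i$ behaves like $e^{3t/2}\sqrt{b_1}\,u_i$, which is decreasing in the reflection direction. More precisely, I use the mean value theorem in $t$. On $]-\infty,\nu]$ one has $\partial_t\bar w_i>0$: the term $\tfrac32 e^{3t/2}u_i(y_i)$ dominates both $2\mu_i e^{2t}$ and the gradient contribution $e^{5t/2}|\nabla u_i|$, with the gradient bounded via $(***2)$ rescaled. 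Since $\bar w_i$ is increasing there and $t^\lambda\le t$, we get $\bar w_i(t^\lambda)\le\bar w_i(t)$.

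\textbf{Step 3: choosing $\nu$.} I pick $\nu$ so that Step 2 covers $t\le\nu$ and Step 1 covers $t\ge\nu$. For Step 1 I need $e^{3t^\lambda/2}\ll\mu_ie^{2t}$ uniformly; this holds once $\lambda\le\nu$ with $\nu$ chosen from the two-sided bound at fixed $t_i$.

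\textbf{Main obstacle.} The real difficulty is uniformity in $i$. The quantities $t_i$, $\lambda_i$ and $\mu_i$ all move with $i$, so $\nu$ must be taken as a fixed number, or at least independent of the quantities that would make the argument circular. I would first try to express everything in the rescaled variable $t-\lambda_i$. The monotonicity of $\bar w_i$ for $t\le\lambda_i-C$ comes from the explicit bubble limit in Proposition 2.1. The comparison for $t$ between $\lambda_i-C$ and $t_i$ comes from the bounds of Proposition 2.4 together with $\mu_i e^{2t}$. If a single $\nu$ cannot serve for all $i$, I would accept $\nu=\nu_i$ depending on $i$, as in [2,3], since only the existence of a starting position for each $i$ is needed for the subsequent moving-plane argument.
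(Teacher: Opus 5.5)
Your ingredients are the right ones, and they are the ones of the argument in [2,3] that the paper cites: for fixed $i$, $\bar w_i$ is increasing in $t$ near $-\infty$; $\bar w_i>0$ on $]-\infty,t_i]\times{\mathbb S}_4$; and $\tilde w_i(s,\theta)\le C_i e^{3s/2}$. However, Step 3 does not close as written.

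You use one number $\nu$ for two different jobs. It is the right end of the monotonicity region (Step 2 covers $t\le\nu$), and it is also the bound on $\lambda$ (Step 1 covers $t\ge\nu$ for every $\lambda\le\nu$). Take $\lambda=\nu$ and let $t\downarrow\nu$. Then $t^\lambda=2\nu-t\to\nu$ as well. The Step 1 comparison $\tilde w_i(t^\lambda,\theta)\le\mu_i e^{2t}$ would then give, in the limit, $\tilde w_i(\nu,\theta)\le\mu_i e^{2\nu}$, i.e.\ $\bar w_i(\nu,\theta)\le 0$. This contradicts the positivity of $\bar w_i$ that you rely on. The crude bound of Step 1 only works when $t^\lambda$ lies far to the left of $t$, and your choice of $\nu$ does not guarantee this. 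Near the diagonal $t=\lambda$ only monotonicity can work, and you have it only up to $\nu$. So for $\lambda$ close to $\nu$ the zone of $t$ just above $\nu$ is covered by neither step.

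The repair is to decouple the two thresholds. For fixed $i$:
\begin{itemize}
\item First choose $\nu_0\le t_i$ with $\partial_t\bar w_i>0$ on $]-\infty,\nu_0]\times{\mathbb S}_4$; your Step 2 computation gives this.
\item Set $m_i=\min_{[\nu_0,t_i]\times{\mathbb S}_4}\bar w_i>0$.
\item Only then take $\nu\le\nu_0$ so small that $C_i e^{3(2\nu-\nu_0)/2}<m_i$.
\end{itemize}
Now let $\lambda\le\nu$. If $t\in[\lambda,\nu_0]$, then $t^\lambda\le t\le\nu_0$, and monotonicity gives $\bar w_i^\lambda\le\bar w_i$. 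If $t\in[\nu_0,t_i]$, then $t^\lambda\le 2\nu-\nu_0$, so $\bar w_i^\lambda\le\tilde w_i(t^\lambda)<m_i\le\bar w_i(t)$.

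As for your ``main obstacle'': $\nu$ must depend on $i$, and this is harmless. In the variable $s=t-\lambda_i$, $\bar w_i$ converges locally uniformly to $e^{3s/2}(1+e^{2s})^{-3/2}$; the correction $\mu_i e^{2t}$ vanishes in the limit, since $\mu_i u_i(y_i)^{-4/3}\to 0$. This limit profile decreases for $s>0$. Hence, for any fixed $\nu$ and $i$ large, the admissible choice $\lambda=\lambda_i+1$ violates the inequality at $t=\lambda+\epsilon$. The lemma only serves to start the moving plane that defines $\xi_i$ for each fixed $i$, so no uniformity is needed. Propositions 2.1 and 2.4 play no role here; smoothness and positivity of the fixed function $u_i$ suffice.
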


{\bf Proof of the lemma}. See[2,3].

\bigskip

We set: $ \lambda_i=-\dfrac{2}{3} \log u_i(y_i) $, then, 

\begin{Lemma}

$$ \bar w_i(\lambda_i,\theta)-\bar w_i(\lambda_i+4,\theta) >0. $$

\end{Lemma}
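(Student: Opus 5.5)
We want $\bar w_i(\lambda_i,\theta)-\bar w_i(\lambda_i+4,\theta)>0$, with $\bar w_i=\tilde w_i-\mu_i e^{2t}$. Expanding the definition,
$$ \bar w_i(\lambda_i,\theta)-\bar w_i(\lambda_i+4,\theta)=\big[\tilde w_i(\lambda_i,\theta)-\tilde w_i(\lambda_i+4,\theta)\big]+\mu_i e^{2\lambda_i}\big(e^{8}-1\big). $$
The first bracket is bounded below by $\tilde k>0$, uniformly in $\theta\in{\mathbb S}_4$, by part 1) of Proposition 2.4. The second term is nonnegative, since $\mu_i>0$ (as $u_i>0$, so $\min_M u_i>0$) and $e^8>1$. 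The sum is therefore at least $\tilde k>0$, which proves the lemma.

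The only thing to check is that part 1) of Proposition 2.4 applies as stated at $\lambda_i=-\frac23\log u_i(y_i)$. Recall where it comes from. Since $\tilde w_i=\sqrt{b_1}\,w_i$ and $w_i(\lambda_i+t,\theta)=e^{3t/2}v_i(e^t\theta)$, we have $\tilde w_i(\lambda_i+t,\theta)=\sqrt{b_1}(\lambda_i+t,\theta)\,e^{3t/2}v_i(e^t\theta)$. Here $b_1\to1$ uniformly on compact $t$-sets, because $e^{\lambda_i}\to0$. By Proposition 2.1, $v_i\to(1+|y|^2)^{-3/2}$ uniformly on compact sets. Hence
$$ \tilde w_i(\lambda_i,\theta)-\tilde w_i(\lambda_i+4,\theta)\longrightarrow 2^{-3/2}-\Big(\frac{e^{2}}{1+e^{2}}\Big)^{3/2}\cdot\ldots $$
Done correctly, the limiting profile is $e^{3t/2}(1+e^{2t})^{-3/2}=(e^{-t}+e^{t})^{-3/2}$. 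It equals $2^{-3/2}$ at $t=0$ and $(e^{-4}+e^4)^{-3/2}$ at $t=4$, which is strictly smaller. So the difference tends to a positive constant, and is at least some $\tilde k>0$ for $i$ large.

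Nothing in the argument is delicate: the correction term $-\mu_ie^{2t}$ is decreasing in $t$, so it only helps the inequality. The real obstacle in the paper lies later, in controlling the error terms of Proposition 2.6 against $\mu_i$.
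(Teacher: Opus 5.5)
Your proof is correct and is essentially the argument the paper defers to [2,3]. You write $\bar w_i(\lambda_i,\theta)-\bar w_i(\lambda_i+4,\theta)=[\tilde w_i(\lambda_i,\theta)-\tilde w_i(\lambda_i+4,\theta)]+\mu_i e^{2\lambda_i}(e^{8}-1)$, bound the bracket below by $\tilde k$ using part 1) of the proposition on $\tilde w_i$, and note that $\mu_i\geq 0$; your check of that part 1) through the limiting profile $(e^{t}+e^{-t})^{-3/2}$ is also right, though you should delete the stray displayed limit involving $\frac{e^{2}}{1+e^{2}}$, which is wrong and which you correct immediately afterwards.
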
.

{\bf Proof of the lemma}. See [2,3].

\bigskip

Let, $ \xi_i=\sup \{ \lambda \leq \lambda_i+2, \bar w_i^{\xi_i}(t,\theta)-\bar w_i(t,\theta) \leq 0, \,\, \forall \,\, (t,\theta)\in [\xi_i,t_i]\times {\mathbb S}_{4} \} $.

\bigskip

$ \xi_i $ exists (see [2]), We obtain:

$$ \tilde w_i^{\xi_i}(t,\theta) + |\nabla_{\theta} \tilde  w_i^{\xi_i}(t,\theta)|+|\nabla_{\theta}^2  \tilde w_i^{\xi_i}(t,\theta)|\leq C(R),\,\,\, \forall \,\, (t,\theta) \in ]-\infty,\log R]\times {\mathbb S}_{4}, $$

We write:

$$ \bar Z_i(\bar w_i^{\xi_i}-\bar w_i)=\bar Z_i(\tilde w_i^{\xi_i}-\tilde w_i)-\mu_i \bar Z_i(e^{2t^{\xi_i}}-e^{2t}), $$

$$ -\bar Z_i(e^{2t^{\xi_i}}-e^{2t})=[4-\dfrac{9}{4}- \dfrac{3}{2}\partial_t a-he^{2t}+b_1^{-1/2} b_2-c_2](e^{2t^{\xi_i}}-e^{2t}) \leq c_1(e^{2t^{\xi_i}}-e^{2t}), $$

with $ c_1>0 $, car $ |\partial_t a|+|\partial_t b_1|+|\partial_{tt} b_1|+|\partial_{t,\theta_j} b_1|+|\partial_{t,\theta_j,\theta_k} b_1|\leq C'e^{2t}<1 $, for $ t $ very small.

\bigskip

We obtain on $ [\xi_i,t_i]\times {\mathbb S}_{4} $,

$$ \bar Z_i(\bar w_i^{\xi_i}-\bar w_i) \leq c_2[(\tilde w_i^{\xi_i})^{7/3}-\tilde w_i^{7/3}]+[\mu_i c_1- C'(R)](e^{2t^{\xi_i}}-e^{2t}) \leq 0, $$

\bigskip

Like in [2,3],  we use the Hopf maximum principle to conclude that:

$$ \sup_{\theta \in {\mathbb S}_{4}} \bar w_i^{\xi_i}(t_i,\theta) \geq \inf_{\theta \in {\mathbb S}_{4}} \bar w_i(t_i,\theta), $$

Thus,

$$  e^{3t_i/2} \min_{B(x_0,\epsilon_1)} u_i \leq \tilde c e^{3(\lambda_i-t_i)/2}, $$

Thus,

$$ [ u_i(y_i)]^{1-2\alpha } \min_{B(x_0,\epsilon_1)} u_i = e^{3(2t_i-\lambda_i)/2} \min_{B(x_0,\epsilon_1)} u_i \leq \tilde c, $$

which it is a contradiction for $ \alpha=\dfrac{3}{7} $.

\bigskip

Thus,

$$ \exists \,\, R>0, \,\,\exists \,\, c=c(M,g,R)>0 \,\,\, [ \sup_{B(x_0,R)} u_i ]^{1/7} \times \inf_{M} u_i \leq c \,\, \forall \,\, i $$

\end{document}